\documentclass[a4paper,12pt,draft]{amsart}
\usepackage[margin=30mm]{geometry}
\usepackage{amssymb, amsmath, amsthm, amscd}

\usepackage[T1]{fontenc}
\usepackage{eucal,mathrsfs,dsfont}
\usepackage{color}


\renewcommand{\le}{\leqslant}
\renewcommand{\ge}{\geqslant}

\definecolor{mno}{rgb}{0.5,0.1,0.5}

\newcommand{\R}{\mathds R}

\newcommand{\var}{\textup{Var}}

\newtheorem{theorem}{Theorem}[section]
\newtheorem{lemma}[theorem]{Lemma}

\theoremstyle{definition}

\newtheorem{example}[theorem]{Example}

\begin{document}
\allowdisplaybreaks
\title[Functional Inequalities for Non-local Dirichlet Forms] {\bfseries A Simple Approach to Functional Inequalities for Non-local Dirichlet Forms}

\author{Jian Wang}
\thanks{\emph{J.\ Wang:}
   School of Mathematics and Computer Science, Fujian Normal University, 350007 Fuzhou, P.R. China. \texttt{jianwang@fjnu.edu.cn}
   }

\date{}

\maketitle

\begin{abstract} With direct and simple proofs,
we establish Poincar\'{e} type inequalities
(including Poincar\'{e} inequalities, weak Poincar\'{e} inequalities
and super Poincar\'{e} inequalities), entropy inequalities and
Beckner-type inequalities for non-local Dirichlet forms. The proofs
are efficient for non-local Dirichlet forms with general jump
kernel, and also work for $L^p (p>1)$ settings. Our results yield a new sufficient condition for fractional Poincar\'{e} inequalities, which were recently studied in \cite{MRS,Gre}. To our knowledge
this is the first result providing entropy inequalities and Beckner-type
inequalities for measures more general than L\'{e}vy measures.   

\medskip

\noindent\textbf{Keywords:} Non-local Dirichelt forms; Poincar\'{e} type inequalities; entropy inequalities; Beckner-type inequalities

\medskip

\noindent \textbf{MSC 2010:} 60G51; 60G52; 60J25; 60J75.
\end{abstract}

\section{Introduction and Main Results}
The question of obtaining Poincar\'{e}-type inequalities (or more generally entropy inequalities) for pure jump L\'{e}vy processes was studied in the last decades, e.g.\ see \cite{BL,Wu,Cha}.
In particular, it was proved by \cite[Corollary 4.2]{Wu} and \cite[Theorem 23]{Cha}
that
\begin{equation}\label{phiineq}\textrm{Ent}_\mu^\Phi(f)\le \iint D_\Phi(f(x),f(x+z))\,\nu_\mu(dz)\,\mu(dx),\quad f\in C_b^\infty(\R^d), f>0\end{equation}
with
$$\textrm{Ent}_\mu^\Phi(f)=\int \Phi (f)\, d\mu-\Phi\Big(\int f d\mu\Big)$$ and $D_\Phi$ is the so-called Bergman distance associated with $\Phi$:
$$D_\Phi(a,b)=\Phi(a)-\Phi(b)-\Phi'(b)(a-b),$$
where $\mu$ is a rather
general probability measure and $\nu_\mu$ is the (singular) L\'{e}vy
measure associated to $\mu$. By setting $\Phi(x)=x^2$ and $\Phi(x)=x\log x$, $\textrm{Ent}_\mu^\Phi(f)$ becomes the classical variance $\var_\mu(f)$ and entropy $ \textrm{Ent}_\mu(f)$ respectively, and so \eqref{phiineq} yields the Poincar\'{e} inequality and the entropy inequality for the choice of measure $(\mu,\nu_\mu)$. Note that either one of the measures $\mu$ and $\nu_\mu$ in \eqref{phiineq} uniquely specifies the other, and so this is  a
strong constraint to study functional inequalities for general
non-local Dirichlet forms. The first breakthrough in this direction
was established in \cite{MRS} by using the methods from harmonic
analysis, and then was extended in \cite{Gre} to $L^p$ weighted Poincar\'{e} inequalities and generalized logarithmic Sobolev inequalities in an abstract situation.

Let $V$ be a locally bounded measurable function on $\R^d$ such that $\int e^{-V(x)}\,dx=1$; that is, $\mu_V(dx):=e^{-V(x)}\,dx$ is a probability measure on $\R^d$.  The main
result in \cite{MRS} (see \cite[Theorem 1.2]{MRS})
 states that, if $V\in C^2(\R^d)$ such that for some constant
$\varepsilon>0$,
\begin{equation*}\frac{(1-\varepsilon)|\nabla V(x)|^2}{2} -\Delta V(x)\to\infty,\qquad
|x|\to \infty,\end{equation*} then there exist two positive constants
$\delta$ and $C_0$ such that for all $f\in C_b^\infty(\R^d)$,
$$\aligned\int (f-\mu_V(f))^2\big(1+|\nabla  V|^{\alpha}\big) &\,\mu_V(dx)
\leqslant C_0D_{\alpha,V,\delta}(f,f),\endaligned$$
where
\begin{equation}\label{MRS}D_{\alpha,V,\delta}(f,f)=\iint
\frac{(f(y)-f(x))^2}{|y-x|^{d+\alpha}}e^{-\delta|y-x|}\,dy
\,\mu_V(dx).\end{equation}
According to the paragraph below
\cite[Remark 1.3]{MRS}, \eqref{MRS} is natural in the sense that:
 we should regard the measure
${|y-x|^{-(d+\alpha)}}e^{-\delta|y-x|}\,dy$ as the L\'{e}vy measure, and $\mu_V(dx)$
as the ambient measure. Namely, $D_{\alpha,V,\delta}$ does get rid of the
constraint of $(\mu,\nu_\mu)$ in \eqref{phiineq}, and it should be a typical example in
study functional inequalities for non-local Dirichlet forms. This leds us to consider the Dirichlet form $( D_{\rho, V},\mathscr{ D}(D_{\rho, V}))$ as follows.
Let $\rho$ be a strictly positive measurable function on $(0,\infty)$ such that $\int_{(0,\infty)}\rho(r)(1\wedge r^2)r^{d-1}\,dr<\infty.$
Let $L^2(\mu_V)$ be the space of Borel measurable functions $f$ on $\R^d$ such that $\mu_V(f^2):=\int f^2(x)\,\mu_V(dx)<\infty$. Set
$$\aligned  D_{\rho, V}(f,f):=&\iint_{x\neq y}\big(f(x)-f(y)\big)^2\rho(|x-y|)\,dy\,\mu_V(dx)\\
\mathscr{D}(D_{\rho,V}):=&\bigg\{ f\in L^2(\mu_V): D_{\rho,V}(f,f)<\infty\bigg\}.\endaligned$$
According to \cite[Example 2.2]{CW}, we know that $( D_{\rho, V},\mathscr{ D}(D_{\rho, V}))$ is a symmetric Dirichlet form such that $C_b^\infty(\R^d)\subset \mathscr{D}(D_{\rho, V}) $, where $C_b^\infty(\R^d)$ denotes the set of smooth functions on $\R^d$ with bounded derivatives for all orders.

The purpose of this note is to present sufficient conditions for Poincar\'{e} type inequalities (i.e.\ Poincar\'{e} inequalities, weak Poincar\'{e} inequalities and super Poincar\'{e} inequalities), entropy inequalities and Beckner-type inequalities for $( D_{\rho, V},\mathscr{ D}(D_{\rho, V}))$. We first state the main result for Poincar\'{e} type inequalities of $( D_{\rho, V},\mathscr{ D}(D_{\rho, V}))$.

\begin{theorem}\label{th1} $(1)$ If there exists a constant $c>0$ such that for any $x$, $y\in \R^d$ with $x\neq y$,
\begin{equation}\label{th1.1}\big(e^{V(x)}+e^{V(y)}\big)\rho(|x-y|)\ge c,\end{equation} then the following Poincar\'{e} inequality
\begin{equation}\label{th1.1.1}\mu_V(f-\mu_V(f))^2 \le c^{-1} D_{\rho,V}(f,f), \quad f\in \mathscr{ D}(D_{\rho, V})\end{equation}  holds.

$(2)$ For any probability measure $\mu_V$, the following weak Poincar\'{e} inequality \begin{equation}\label{th1.2.1}\mu_V(f-\mu_V(f))^2\le \alpha(r)D_{\rho,V}(f,f)+r\|f\|_\infty^2,\quad r>0, f\in \mathscr{ D}(D_{\rho, V})\end{equation}
holds with  $$\alpha(r)=\inf\Bigg\{\frac{1}{\inf\limits_{0<|x-y|\le s}\big[(e^{V(x)}+e^{V(y)})\rho(|x-y|)\big]}: \iint_{|x-y|> s}\mu_V(dy)\,\mu_V(dx)\le \frac{r}{2}\Bigg\}.$$

$(3)$ Suppose that
there exists a nonnegative locally bounded measurable function $w$ on $\R^d$ such that $$\lim\limits_{|x|\to\infty}w(x)=\infty,$$ and for any $x$, $y\in \R^d$ with $x\neq y$,
\begin{equation}\label{th1.2} e^{V(x)}+e^{V(y)}\ge\frac{ w(x)+w(y)}{\rho(|x-y|)}.\end{equation} Then the following super Poincar\'{e} inequality
\begin{equation}\label{th1.2.2}\mu_V(f^2)\le r D_{\rho,V}(f,f)+ \beta(r)\mu_V(|f|)^2,\quad r>0, f\in \mathscr{ D}(D_{\rho, V}) \end{equation} holds with
$$\beta(r)=\inf\bigg\{\frac{2\mu_V(\omega)}{\inf\limits_{|x|\ge t}\omega(x)}+\beta_t(t\wedge s): \frac{2}{\inf_{|x|\ge t}w(x)}+s\le r\textrm{ and } t,s>0\bigg\},$$ where for any $t>0$,
$$\beta_t(s)=\inf\left\{ \frac{c_0\Big(\sup\limits_{|z|\le 2t}e^{V(z)}\Big)^2}{u^d\Big(\inf\limits_{|z|\le t} e^{V(z)}\Big)}:  \frac{c_0\Big(\sup\limits_{0<\varepsilon\le u}\rho(\varepsilon)^{-1}\Big)\Big(\sup\limits_{|z|\le 2t}e^{V(z)}\Big)}{u^d\Big(\inf\limits_{|z|\le t} e^{V(z)}\Big)}\le s \textrm{ and } u>0 \right\}.$$
 \end{theorem}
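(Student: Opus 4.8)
The unifying idea behind all three parts is a splitting of the double integral defining $D_{\rho,V}(f,f)$ together with the elementary identity $\mu_V(f-\mu_V(f))^2 = \tfrac12\iint (f(x)-f(y))^2\,\mu_V(dx)\,\mu_V(dy)$. For part (1), I would symmetrize: since $\mu_V(dx) = e^{-V(x)}\,dx$, we have $D_{\rho,V}(f,f) = \tfrac12\iint (f(x)-f(y))^2\rho(|x-y|)\big(e^{-V(x)}+e^{-V(y)}\big)\,dx\,dy$ — wait, one must be careful: the kernel $\rho(|x-y|)\,dy\,\mu_V(dx)$ is not symmetric, so first rewrite $D_{\rho,V}(f,f)=\tfrac12\iint (f(x)-f(y))^2\rho(|x-y|)\big(e^{-V(x)}+e^{-V(y)}\big)\,dx\,dy$ using Fubini. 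Then compare integrand-wise: hypothesis \eqref{th1.1} says $\rho(|x-y|)\big(e^{V(x)}+e^{V(y)}\big)\ge c$, and since $e^{-V(x)}+e^{-V(y)} = e^{-V(x)-V(y)}\big(e^{V(x)}+e^{V(y)}\big) \cdot$ ... actually the clean route is: $\rho(|x-y|)(e^{-V(x)}+e^{-V(y)}) \ge \rho(|x-y|)\cdot\frac{e^{V(x)}+e^{V(y)}}{??}$. The honest comparison is $e^{-V(x)}+e^{-V(y)}\ge c\,e^{-V(x)}e^{-V(y)}\big/1$? — I would instead directly bound $\tfrac12(f(x)-f(y))^2 e^{-V(x)}e^{-V(y)} \le c^{-1}\cdot\tfrac12(f(x)-f(y))^2\rho(|x-y|)\big(e^{-V(x)}+e^{-V(y)}\big)\cdot e^{V(x)}e^{V(y)}\rho(|x-y|)^{-1}$, using that $e^{V(x)}e^{V(y)}\le e^{V(x)}e^{V(y)}\cdot\tfrac{(e^{V(x)}+e^{V(y)})\rho(|x-y|)}{c}$ fails dimensionally. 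The correct and simple observation is: $e^{-V(x)}e^{-V(y)} \le c^{-1}\rho(|x-y|)\big(e^{-V(x)}+e^{-V(y)}\big)$, because multiplying \eqref{th1.1} by $e^{-V(x)-V(y)}$ gives exactly $\rho(|x-y|)(e^{-V(y)}+e^{-V(x)})\ge c\,e^{-V(x)-V(y)}$. Integrating $\tfrac12(f(x)-f(y))^2$ against both sides yields $\mu_V(f-\mu_V(f))^2\le c^{-1}D_{\rho,V}(f,f)$.

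For part (2), the strategy is the standard truncation used to pass from Poincaré to weak Poincaré: fix $s>0$ and split the variance integral over $\{|x-y|\le s\}$ and $\{|x-y|>s\}$. On the near-diagonal part, apply the argument of part (1) with the local constant $c_s:=\inf_{0<|x-y|\le s}\big[(e^{V(x)}+e^{V(y)})\rho(|x-y|)\big]$ to dominate it by $c_s^{-1}D_{\rho,V}(f,f)$. On the far part, bound $(f(x)-f(y))^2\le 4\|f\|_\infty^2$ (or $\le 2\|f-\mu_V(f)\|_\infty^2$, but $\|f\|_\infty$ suffices with the stated constant) and use $\iint_{|x-y|>s}\mu_V(dx)\mu_V(dy)\le r/2$, giving the term $r\|f\|_\infty^2$. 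Taking the infimum over admissible $s$ produces $\alpha(r)$.

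Part (3) is the main obstacle and requires a genuine local Poincaré input. The hypothesis \eqref{th1.2} with $w\to\infty$ plays the role of a Lyapunov-type condition. I would argue: since $\mu_V(f^2)\le \mu_V\big(f^2\,\tfrac{2w}{\inf_{|x|\ge t}w}\big) + \big(\text{ball part}\big)$ is crude, instead localize to the ball $B_t=\{|x|\le t\}$. Outside $B_t$, \eqref{th1.2} forces $e^{V(x)}+e^{V(y)}$ large, and one extracts a factor controlling $\int_{B_t^c}f^2\,d\mu_V$ by $\tfrac{2}{\inf_{|x|\ge t}w(x)}D_{\rho,V}(f,f)$ plus a cross term — this explains the summand $\tfrac{2}{\inf_{|x|\ge t}w(x)}$ inside the constraint defining $\beta(r)$ and the factor $\tfrac{2\mu_V(\omega)}{\inf_{|x|\ge t}\omega(x)}$. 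On $B_t$ itself, one needs a \emph{local super Poincaré inequality} for the truncated Dirichlet form restricted to $B_{2t}$: this is where $\beta_t(\cdot)$ enters. The idea is to compare $\rho(|x-y|)$ on $B_{2t}$ with a uniformly elliptic-type kernel — using $\sup_{0<\varepsilon\le u}\rho(\varepsilon)^{-1}$ and the volume factor $u^d$ from a ball of radius $u$ — together with the local comparability of $e^{V}$ on $B_{2t}$ via $\sup_{|z|\le 2t}e^{V(z)}$ and $\inf_{|z|\le t}e^{V(z)}$, to reduce to the known super Poincaré inequality on a bounded domain for a jump kernel bounded below near the diagonal; the parameter $u$ is the range over which $\rho^{-1}$ is controlled, and the Lebesgue measure scaling yields $u^d$ in the denominators. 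The hardest technical point is making the near-diagonal comparison on $B_{2t}$ rigorous when $\rho$ is merely integrable against $1\wedge r^2$ (so $\rho$ may blow up at $0$): one uses the "small jumps" localizing lemma — integrating $(f(x)-f(y))^2$ over $|x-y|\le u$ inside $B_{2t}$ against $\rho$ dominates, up to the constant $c_0(\sup_{\varepsilon\le u}\rho(\varepsilon)^{-1})u^{-d}$, the local oscillation of $f$, after which the bounded-domain super Poincaré inequality for the constant kernel applies. Finally, combining the two regions and optimizing over $t$, $s$, $u$ gives \eqref{th1.2.2} with the stated $\beta(r)$.
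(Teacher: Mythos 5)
Your plan is correct and follows essentially the same route as the paper: the pointwise inequality $e^{-V(x)}e^{-V(y)}\le c^{-1}\rho(|x-y|)\big(e^{-V(x)}+e^{-V(y)}\big)$ combined with the identity $\mu_V(f-\mu_V(f))^2=\tfrac12\iint(f(x)-f(y))^2\,\mu_V(dy)\,\mu_V(dx)$ gives (1), the near/far splitting at radius $s$ gives (2), and the two-region decomposition for (3) is exactly the paper's. For (3) the paper makes your tail estimate precise via the weighted inequality $\mu_V\big((f-\mu_V(f))^2w\big)\le D_{\rho,V}(f,f)$ (Jensen's inequality, symmetrization in $w$, and \eqref{th1.2}), and proves the local super Poincar\'e inequality by mollifying $f$ over balls of radius $u$ and comparing with the near-diagonal lower bound of $\rho$ --- which is precisely the argument you sketch.
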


\bigskip

To illustrate the power of Theorem \ref{th1}, we will consider the following examples.

\begin{example}\label{ex1} Let $\mu_V(dx)=e^{-V(x)}\,dx:=C_{d,\varepsilon} (1+|x|)^{-(d+\varepsilon)}\,dx$ with $\varepsilon>0$, and $\rho(r)=r^{-(d+\alpha)}$ with $\alpha\in (0,2)$.

\begin{itemize}

\item[(1)] If $\varepsilon\ge \alpha$, then the Poincar\'{e} inequality \eqref{th1.1.1} holds with $c=\frac{2^{1-(d+\alpha)}}{C_{d,\varepsilon}}$.

\item[(2)] If $0<\varepsilon<\alpha$, then the weak Poincar\'{e} inequality \eqref{th1.2.1} holds with $$\alpha(r)=c_1\big(1+ r^{-(\alpha-\varepsilon)/\varepsilon}\big)$$ for some constant $c_1>0$.

\item[(3)] If $\varepsilon>\alpha$, then the super Poincar\'{e} inequality \eqref{th1.2.2} holds with $$\beta(r)=c_2\bigg(1+ r^{-\frac{d}{\alpha}-\frac{(d+\varepsilon)(d+2\alpha)}{\alpha(\varepsilon-\alpha)}}\bigg)$$ for some constant $c_2>0$.

\end{itemize}
According to \cite[Corollary 1.2]{WW}, we know that all the conclusions above are optimal.
\end{example}

\begin{example} Let $\mu_V(dx)=e^{-V(x)}\,dx:=C_{d,\alpha,\varepsilon}(1+|x|)^{-(d+\alpha)}\log^\varepsilon(e+|x|)\,dx$ with $\varepsilon\in\R$, and $\rho(r)=r^{-(d+\alpha)}$ with $\alpha\in (0,2)$. \begin{itemize}

\item[(1)] If $\varepsilon\le0$, then the Poincar\'{e} inequality \eqref{th1.1.1} holds with $c=\frac{2^{1-(d+\alpha)}}{C_{d,\varepsilon}}$.

\item[(2)] If $\varepsilon>0$, then the weak Poincar\'{e} inequality \eqref{th1.2.1} holds with $$\alpha(r)=c_3\big(1+ \log^{\varepsilon}(1+r^{-1})\big)$$ for some constant $c_3>0$.

\item[(3)] If $\varepsilon<0$, then the super Poincar\'{e} inequality \eqref{th1.2.2} holds with $$\beta(r)=\exp\Big(c_4(1+r^{1/\varepsilon})\Big)$$ for some constant $c_4>0$.

\end{itemize}
By \cite[Corollary 1.3]{WW}, all the conclusions above are also sharp.
\end{example}

\begin{example} Let $\mu_V(dx)=e^{-V(x)}\,dx:=C_\lambda e^{-\lambda |x|}\,dx$ with $\lambda>0$, and $\rho(r)=e^{-\delta r}r^{-(d+\alpha)}$ with $\delta\ge0$ and $\alpha\in (0,2)$. Therefore, if $\lambda>2\delta$, then the super Poincar\'{e} inequality \eqref{th1.2.2} holds with $\beta(r)=c_5\Big(1+ r^{-\frac{d}{\alpha}-\frac{2\lambda(d+2\varepsilon)}{\alpha(\lambda-2\delta)}}\Big)$ for some constant $c_5>0$. In particular, the Poincar\'{e} inequality \eqref{th1.1.1} holds. Note that, this conclusion can not be deduced from \cite[Theorem 1.1]{MRS}, see also the statement before \eqref{MRS}.
\end{example}

Next, we turn to study entropy inequalities and Beckner-type inequalities for $( D_{\rho, V},\mathscr{ D}(D_{\rho, V}))$. Recall that for any $f\in \mathscr{ D}(D_{\rho, V})$ with $f>0$,
$$\textrm{Ent}_{\mu_V}(f):=\mu_V(f\log f)-\mu_V(f)\log\mu_V(f).$$

\begin{theorem}\label{th2} Suppose that \eqref{th1.1} is satisfied. Then the following entropy inequality
\begin{equation}\label{th2.2}\textrm{Ent}_{\mu_V}(f) \le c^{-1} D_{\rho,V}(f,\log f)\end{equation}  holds for all $f\in \mathscr{ D}(D_{\rho, V})$ with $f>0$; and moreover, the following Beckner-type inequality also holds: for any $p\in(1,2]$ and $f\in \mathscr{ D}(D_{\rho, V})$ with $f\ge0$,
\begin{equation}\label{th2.3}\mu_V(f^p)-\mu_V(f)^p\le c^{-1} D_{\rho,V}(f,f^{p-1}).\end{equation}
 \end{theorem}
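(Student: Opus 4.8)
The plan is to mirror the proof of Theorem~\ref{th1}(1): the hypothesis \eqref{th1.1} is designed precisely so that the jump kernel of $D_{\rho,V}$ dominates, up to the constant $c$, the ``mean field'' kernel attached to the product measure $\mu_V\otimes\mu_V$, and once this domination is in hand both \eqref{th2.2} and \eqref{th2.3} reduce to an elementary identity for $\mu_V\otimes\mu_V$ together with Jensen's inequality. The first step is thus to record a bilinear version of the reduction: writing $D_{\rho,V}(f,g)=\iint_{x\neq y}(f(x)-f(y))(g(x)-g(y))\rho(|x-y|)\,dy\,\mu_V(dx)$ and symmetrizing in $x\leftrightarrow y$ (the integrand $(f(x)-f(y))(g(x)-g(y))\rho(|x-y|)$ being symmetric), one gets
$$D_{\rho,V}(f,g)=\frac12\iint_{x\neq y}(f(x)-f(y))(g(x)-g(y))\rho(|x-y|)\big(e^{-V(x)}+e^{-V(y)}\big)\,dx\,dy.$$
From \eqref{th1.1} one has $\rho(|x-y|)\ge c/(e^{V(x)}+e^{V(y)})$, hence $\rho(|x-y|)\big(e^{-V(x)}+e^{-V(y)}\big)\ge c\,e^{-V(x)-V(y)}$; therefore, \emph{whenever $g$ is such that $(f(x)-f(y))(g(x)-g(y))\ge0$ for all $x,y$} (so that Tonelli applies to a nonnegative integrand),
$$D_{\rho,V}(f,g)\ \ge\ \frac{c}{2}\iint (f(x)-f(y))(g(x)-g(y))\,\mu_V(dx)\,\mu_V(dy).$$

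For \eqref{th2.2} I would then take $g=\log f$, which is admissible since $t\mapsto\log t$ is increasing. Expanding the right-hand double integral against the product measure gives $\iint(f(x)-f(y))(\log f(x)-\log f(y))\,\mu_V(dx)\mu_V(dy)=2\big(\mu_V(f\log f)-\mu_V(f)\mu_V(\log f)\big)$, so the displayed bound reads $D_{\rho,V}(f,\log f)\ge c\big(\mu_V(f\log f)-\mu_V(f)\,\mu_V(\log f)\big)$. By Jensen's inequality $\mu_V(\log f)\le\log\mu_V(f)$, whence $\mu_V(f\log f)-\mu_V(f)\mu_V(\log f)\ge\mu_V(f\log f)-\mu_V(f)\log\mu_V(f)=\textrm{Ent}_{\mu_V}(f)$, which is exactly \eqref{th2.2}. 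The Beckner-type inequality \eqref{th2.3} comes out the same way with $g=f^{p-1}$ (admissible since $t\mapsto t^{p-1}$ is increasing for $p>1$): the expansion gives $\iint(f(x)-f(y))(f(x)^{p-1}-f(y)^{p-1})\,\mu_V(dx)\mu_V(dy)=2\big(\mu_V(f^p)-\mu_V(f)\mu_V(f^{p-1})\big)$, so $D_{\rho,V}(f,f^{p-1})\ge c\big(\mu_V(f^p)-\mu_V(f)\mu_V(f^{p-1})\big)$, and since $t\mapsto t^{p-1}$ is concave for $p\in(1,2]$, Jensen now yields $\mu_V(f^{p-1})\le\mu_V(f)^{p-1}$, hence $\mu_V(f^p)-\mu_V(f)\mu_V(f^{p-1})\ge\mu_V(f^p)-\mu_V(f)^p$, which is \eqref{th2.3}.

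The only genuinely delicate point is the integrability and the justification of the algebraic manipulations for an arbitrary $f\in\mathscr D(D_{\rho,V})$: neither $\log f$ nor $f^{p-1}$ need lie in $L^2(\mu_V)$, and splitting $\iint(f(x)-f(y))(\log f(x)-\log f(y))\,\mu_V(dx)\mu_V(dy)$ into $2\big(\mu_V(f\log f)-\mu_V(f)\mu_V(\log f)\big)$ is only safe once the individual terms are known to be finite. I expect this to be the main (though routine) obstacle, to be handled by first proving \eqref{th2.2} and \eqref{th2.3} for $f$ bounded and bounded away from $0$, where every quantity above is finite and the symmetrization, Tonelli and Jensen steps are transparent, and then passing to the general case by truncation, replacing $f$ by $(f\vee\varepsilon)\wedge n$, letting $\varepsilon\downarrow0$ and $n\uparrow\infty$, using monotone convergence on the nonnegative double integrals, and noting that if the right-hand side is infinite there is nothing to prove.
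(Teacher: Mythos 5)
Your proposal is correct and follows essentially the same route as the paper: symmetrize the bilinear form, use \eqref{th1.1} to dominate the product-measure kernel (noting $(e^{-V(x)}+e^{-V(y)})/(e^{V(x)}+e^{V(y)})=e^{-V(x)-V(y)}$), expand the double integral, and finish with $\mu_V(\log f)\le\log\mu_V(f)$ and $\mu_V(f^{p-1})\le\mu_V(f)^{p-1}$ (the paper invokes H\"older for the latter where you invoke concavity of $t\mapsto t^{p-1}$ — the same bound). Your closing remarks on integrability and truncation only add rigor that the paper's proof passes over in silence.
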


The entropy inequality \eqref{th2.2} and Beckner-type inequality \eqref{th2.3} are stronger than the Poincar\'{e} inequality \eqref{th1.1.1} (To see this, one can apply these inequalities to the function $1+\varepsilon f$ and then take the limit as $\varepsilon\to0$). Clearly, the Beckner-type inequality \eqref{th2.3} reduces to the Poincar\'{e} inequality \eqref{th1.1.1} if $p=2$, whereas dividing both sides by $p-1$ and taking the limit as $p\to1$ we obtain the entropy inequality \eqref{th2.2}.
As mentioned in the remarks below \eqref{MRS}, comparing Theorem \ref{th2} with \eqref{phiineq} the improvement is due to that
we do not impose any link between the measure $\mu_V(dx)$ on $x$ and
the singular measure $\rho(|z|)\,dz$ on $z=y-z.$  This is to our
knowledge the first result that gets rid of the strong constraint
for entropy inequalities  and Beckner-type inequalities of non-local Dirichlet forms.

\begin{example}\label{ex2}[{\bf Continuation of Example \ref{ex1}}] Let $$\mu_V(dx)=e^{-V(x)}\,dx:=C_{d,\varepsilon} (1+|x|)^{-(d+\varepsilon)}\,dx$$ with $\varepsilon>0$, and $\rho(r)=r^{-(d+\alpha)}$ with $\alpha\in (0,2)$.

\begin{itemize}

\item[(1)] If $\varepsilon\ge \alpha$, then, according to Theorem \ref{th2}, the entropy inequality \eqref{th2.2} and Beckner-type inequality \eqref{th2.3} hold with $c=\frac{2^{1-(d+\alpha)}}{C_{d,\varepsilon}}$.

\item[(2)] If $0<\varepsilon<\alpha$, then, according to \cite[Corollary 1.2]{WW}, the Poincar\'{e} inequality \eqref{th1.1.1} does not hold. Hence, by the remark below Theorem \ref{th2}, both the entropy inequality \eqref{th2.2} and Beckner-type inequality \eqref{th2.3} do not hold.
\end{itemize}
According to Examples \ref{ex1}, \ref{ex2} and \cite[Corollary 1.2]{WW}, we know that for the probability measure $\mu_V(dx)=e^{-V(x)}\,dx:=C_{d,\alpha} (1+|x|)^{-(d+\alpha)}\,dx$, it fulfills the entropy inequality \eqref{th2.2} and the Beckner-type inequality \eqref{th2.3}, but not the super Poincar\'{e} inequality \eqref{th1.2.2}.
\end{example}

\section{Proofs of Theorems and Example \ref{ex1}}
\begin{proof}[Proof of Theorem $\ref{th1}$] (1) For any $f\in \mathscr{D}(D_{\rho, V}),$  \begin{equation}\label{proof1}\aligned &\frac{1}{2} \iint \big(f(x)-f(y)\big)^2\,\mu_V(dy)\,\mu_V(dx)\\
&=\frac{1}{2} \iint \big(f^2(x)+f^2(y)-2f(x)f(y)\big)\,\mu_V(dy)\,\mu_V(dx)\\
&=\mu_V(f^2)-\mu_V(f)^2=\mu_V(f-\mu_V(f))^2.\endaligned\end{equation}
On the other hand, by \eqref{th1.1}, we find that
$$\aligned
&\frac{1}{2} \iint \big(f(x)-f(y)\big)^2\,\mu_V(dy)\,\mu_V(dx)\\
&= \frac{1}{2}\iint_{x\neq y} \big(f(x)-f(y)\big)^2\rho(|x-y|)\rho(|x-y|)^{-1}\,\mu_V(dy)\,\mu_V(dx)\\
&\le c^{-1}\iint_{x\neq y} \big(f(x)-f(y)\big)^2\rho(|x-y|)\,\frac{e^{-V(x)}+e^{-V(y)}}{2}\,dy\,dx\\
&= c^{-1} D_{\rho,V}(f,f),
\endaligned$$ which, along with \eqref{proof1}, yields the required assertion.

(2) According to \eqref{proof1}, for any $s>0$ and $f\in \mathscr{D}(D_{\rho, V}),$
\begin{align*} \mu_V(f-\mu_V(f))^2&=\frac{1}{2} \iint (f(x)-f(y))^2\,\mu_V(dy)\,\mu_V(dx)\\
&=\frac{1}{2}\iint_{0<|x-y|\le s} (f(x)-f(y))^2\,\mu_V(dy)\,\mu_V(dx) \\
&\quad +\frac{1}{2}\iint_{|x-y|> s} (f(x)-f(y))^2\,\mu_V(dy)\,\mu_V(dx)\\
&\le \iint_{0<|x-y|\le s} (f(x)-f(y))^2\rho(|x-y|)\\
&\qquad\qquad\times  \bigg[\frac{e^{-V(x)-V(y)}}{(e^{-V(x)}+e^{-V(y)})\rho(|x-y|)}\bigg]\frac{e^{-V(x)}+e^{-V(y)}}{2}\,dy\,dx\\
&\quad+ 2\|f\|_\infty^2 \iint_{|x-y|> s} \,\mu_V(dy)\,\mu_V(dx)\\
&\le \bigg(\sup_{0<|x-y|\le s}\frac{1}{(e^{V(x)}+e^{V(y)})\rho(|x-y|)}\bigg) D_{\rho,V}(f,f)\\
&\quad+\bigg(2 \iint_{|x-y|>s} \,\mu_V(dy)\,\mu_V(dx)\bigg)\|f\|_\infty^2.
\end{align*} The desired assertion follows from the definition of $\alpha$.

(3) For any $f\in \mathscr{D}(D_{\rho, V})$, by Jensen's inequality,
$$\aligned \mu_V((f-\mu_V(f))^2w)&=\int \Big(f(x)-\int f(y)\,\mu_V(dy)\Big)^2w(x)\,\mu_V(dx)\\
&=\int\Big(\int(f(x)-f(y))\,\mu_V(dy)\Big)^2w(x)\,\mu_V(dx)\\
&\le \iint (f(x)-f(y))^2w(x)\,\mu_V(dy)\,\mu_V(dx).
\endaligned$$
This implies that
$$\aligned \mu_V((f-\mu_V(f))^2w)&\le \iint (f(x)-f(y))^2\frac{w(x)+w(y)}{2}\,\mu_V(dy)\,\mu_V(dx).
\endaligned$$ Thus, by \eqref{th1.2}, we arrive at
\begin{equation}\label{prof1}\aligned\mu_V((f-\mu_V(f))^2w)&\le\iint (f(x)-f(y))^2\\
&\qquad\qquad\times \rho(|x-y|)\frac{e^{V(x)}+e^{V(y)}}{2}\,\mu_V(dy)\,\mu_V(dx)\\
&\le D_{\rho,V}(f,f).\endaligned \end{equation}

Next, we will follow the proof of \cite[Proposition 1.6]{CW} to obtain the super Poincar\'{e} inequality from \eqref{prof1}.
We first claim that $\mu_V(\omega)<\infty$. In fact, let $C_c^\infty(\R^d)$ be the set of smooth functions on $\R^d$ with compact support.
Choose a function $g\in C_c^{\infty}(\R^d)$ such that $g(x)=0$ for every $|x|\ge 1$ and
$\mu_V(g)=1$. Then, applying this test function $g$ into (\ref{prof1}) and noting the fact that $C_c^\infty(\R^d)\subset C_b^\infty(\R^d)\subset \mathscr{D}(D_{\rho, V})$, we have
\begin{equation*}
\begin{split}
\int_{\{|x|\ge 1\}}\omega(x)\,\mu_V(dx)&\le \int \big(g(x)-\mu_V(g)\big)^2\omega(x)\,\mu_V(dx)\leqslant D_{\rho,V}(g,g)<\infty.
\end{split}
\end{equation*}
Since the function $\omega$ is bounded on $\{x \in \R^d: |x|\le 1\}$,
$\int_{\{|x|\le 1\}}\omega(x)\,\mu_V(dx)<\infty$. Combining both estimates above, we prove the desired claim.

For any $t>1$ large enough and $f \in \mathscr{D}(D_{\rho, V})$, by (\ref{prof1}), we have
\begin{equation*}
\begin{split}
\int_{\{|x|\ge t\}}f^2(x)\,\mu_V(dx)&\le \frac{1}{\inf\limits_{|x|\ge t}\omega(x)}\int f^2(x)\omega(x)\, \mu_V(dx)\\
&\le \frac{2}{\inf\limits_{|x|\ge t}\omega(x)}\int \big(f(x)-\mu_V(f)\big)^2\omega(x)\, \mu_V(dx)\\
&\quad+ \frac{2}{\inf\limits_{|x|\ge t}\,\omega(x)}\int \mu_V(f)^2\omega(x)\, \mu_V(dx)\\
&\le \frac{2}{\inf\limits_{|x|\ge t}\omega(x)}\bigg(D_{\rho,V}(f,f)+
{\mu_V(\omega)}\,\mu_V(|f|)^2\bigg),
\end{split}
\end{equation*} where the second inequality follows from the inequality that for any $a$, $b\in\R$, $a^2\le 2(a-b)^2+2b^2.$

 On the other hand, Lemma \ref{lemma} below shows that the local super
Poincar\'{e}  inequality
\begin{equation}\label{local}
\begin{split}
& \int_{\{|x|\le t\}}f^2(x)\,\mu_V(dx)\le sD_{\rho,V}(f,f)+ \beta_t(t\wedge s)\mu_V(|f|)^2,\quad s>0
\end{split}
\end{equation}
holds for any $t
 > 1$ and $f\in \mathscr{D}(D_{\rho, V})$.

Combining both estimates above, we get that for $t>1$ large enough and any $f\in \mathscr{D}(D_{\rho, V})$,
\begin{equation*}
\begin{split}
\mu_V(f^2)\le& \Big(\frac{2}{\inf\limits_{|x|\ge t}\omega(x)}+s\Big)D_{\rho,V}(f,f)+\bigg(\frac{2\mu_V(\omega)}{\inf\limits_{|x|\ge t}\omega(x)}+\beta_t(t\wedge s)\bigg)\mu_V(|f|)^2,\quad  s>0.
\end{split}
\end{equation*} This, along with $\lim\limits_{|x|\rightarrow \infty}\omega(x)=\infty$  and the definition of $\beta$, yields the required super Poincar\'{e} inequality.
 \end{proof}

For the local super Poincar\'{e} inequality \eqref{local} in part (3) of the proof above, we can see from the following

\begin{lemma}\label{lemma} For any $f\in \mathscr{D}(D_{\rho, V})$ and $r>0$, we have
$$ \int_{B(0,r)}f^2(x)\,\mu_V(dx)\le s D_{\rho, V}(f,f)+\beta_r(r\wedge s) \mu_V(|f|)^2\quad s>0,$$ where
$$\beta_r(s)=\inf\left\{ \frac{2\Big(\sup\limits_{|z|\le 2r}e^{V(z)}\Big)^2}{|B(0,t)|\Big(\inf\limits_{|z|\le r} e^{V(z)}\Big)}:  \frac{2\Big(\sup\limits_{0<\varepsilon\le t}\rho(\varepsilon)^{-1}\Big) \Big(\sup\limits_{|z|\le 2r}e^{V(z)}\Big)}{|B(0,t)|\Big(\inf\limits_{|z|\le r} e^{V(z)}\Big)}\le s\textrm{ and }t>0\right\},$$ and $|B(0,t)|$ denotes the volume of the ball with radius $t$.
\end{lemma}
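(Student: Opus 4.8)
The plan is to prove, for each $0<t\le r$ and each $f\in\mathscr{D}(D_{\rho,V})$, the estimate
\[\int_{B(0,r)}f^2(x)\,\mu_V(dx)\le\frac{2\sup_{0<\varepsilon\le t}\rho(\varepsilon)^{-1}}{|B(0,t)|}\,D_{\rho,V}(f,f)+\frac{2\big(\sup_{|z|\le 2r}e^{V(z)}\big)^2}{|B(0,t)|\,\big(\inf_{|z|\le r}e^{V(z)}\big)}\,\mu_V(|f|)^2,\]
and then to obtain the lemma by optimising over $t$. For the optimisation step, note that if $t$ satisfies the constraint in the definition of $\beta_r(r\wedge s)$, then, since $\sup_{|z|\le 2r}e^{V(z)}\ge\inf_{|z|\le r}e^{V(z)}$, the coefficient of $D_{\rho,V}(f,f)$ above is automatically at most $r\wedge s\le s$; taking the infimum of the coefficient of $\mu_V(|f|)^2$ over admissible $t$ then produces exactly $\beta_r(r\wedge s)$.

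To prove the displayed estimate, fix $0<t\le r$, set $m(x)=|B(0,t)|^{-1}\int_{B(x,t)}f(y)\,dy$ for $x\in B(0,r)$, and integrate $f(x)^2\le 2(f(x)-m(x))^2+2m(x)^2$ over $B(0,r)$ against $\mu_V$. For the ``Dirichlet part'', Jensen's inequality for the normalised Lebesgue measure on $B(x,t)$ gives $(f(x)-m(x))^2\le|B(0,t)|^{-1}\int_{B(x,t)}(f(x)-f(y))^2\,dy$; since $|x-y|<t$ on $B(x,t)$ one may insert $(f(x)-f(y))^2\le\big(\sup_{0<\varepsilon\le t}\rho(\varepsilon)^{-1}\big)(f(x)-f(y))^2\rho(|x-y|)$, and since the integrand is nonnegative and $\{(x,y):x\in B(0,r),\,y\in B(x,t)\}$ sits (up to the diagonal) inside $\{x\ne y\}$, the resulting double integral is at most $D_{\rho,V}(f,f)$. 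This yields the first term on the right-hand side.

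The second (``mean'') term is the heart of the matter. Put $\phi(x)=|B(0,t)|^{-1}\int_{B(x,t)}|f(y)|\,dy$, so that $|m(x)|\le\phi(x)$ and, because $\phi\ge0$, $\int_{B(0,r)}m(x)^2\,\mu_V(dx)\le\big(\sup_{x\in B(0,r)}\phi(x)\big)\int_{B(0,r)}\phi(x)\,\mu_V(dx)$. As $t\le r$, one has $B(x,t)\subset B(0,2r)$ for $x\in B(0,r)$, so $\phi(x)\le|B(0,t)|^{-1}\int_{B(0,2r)}|f(y)|\,dy\le|B(0,t)|^{-1}\big(\sup_{|z|\le 2r}e^{V(z)}\big)\mu_V(|f|)$, which controls the first factor. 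For the second factor, Fubini gives $\int_{B(0,r)}\phi(x)\,\mu_V(dx)=|B(0,t)|^{-1}\int_{\R^d}|f(y)|\big(\int_{B(0,r)\cap B(y,t)}e^{-V(x)}\,dx\big)\,dy$; bounding $e^{-V(x)}\le\big(\inf_{|z|\le r}e^{V(z)}\big)^{-1}$ on $B(0,r)$ and $|B(0,r)\cap B(y,t)|\le|B(0,t)|$ makes the inner integral at most $|B(0,t)|\big(\inf_{|z|\le r}e^{V(z)}\big)^{-1}$, and since the $y$-integrand is supported in $B(0,2r)$ we get $\int_{B(0,r)}\phi(x)\,\mu_V(dx)\le\big(\inf_{|z|\le r}e^{V(z)}\big)^{-1}\int_{B(0,2r)}|f(y)|\,dy\le\big(\sup_{|z|\le 2r}e^{V(z)}\big)\big(\inf_{|z|\le r}e^{V(z)}\big)^{-1}\mu_V(|f|)$. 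Multiplying the two factors produces precisely $\frac{2(\sup_{|z|\le 2r}e^{V(z)})^2}{|B(0,t)|\,\inf_{|z|\le r}e^{V(z)}}\mu_V(|f|)^2$.

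I expect the main obstacle to be exactly this treatment of the mean term: the crude uniform bound $m(x)^2\le|B(0,t)|^{-2}\big(\sup_{|z|\le 2r}e^{V(z)}\big)^2\mu_V(|f|)^2$ combined with $\mu_V(B(0,r))\le1$ only yields the weaker denominator $|B(0,t)|^2$ rather than $|B(0,t)|\inf_{|z|\le r}e^{V(z)}$, so one genuinely has to split $\phi\cdot\phi$ into an $L^\infty(B(0,r))$ factor and an $L^1(B(0,r),\mu_V)$ factor, the Fubini step in the latter being what cancels one power of $|B(0,t)|$. A secondary point is the radius bookkeeping: the averaging ball must have radius $t\le r$ to remain inside $B(0,2r)$ (which is why $\sup_{|z|\le 2r}$ and $\inf_{|z|\le r}$ are the relevant ranges), and one must check that reducing the lemma's infimum over all $t>0$ to this range — together with the appearance of $r\wedge s$ — costs nothing for the purposes of the stated inequality. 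The remaining steps (the Jensen estimates, the replacement of $(f(x)-f(y))^2$ by $(f(x)-f(y))^2\rho(|x-y|)$, and enlarging restricted double integrals to recover $D_{\rho,V}(f,f)$) are routine.
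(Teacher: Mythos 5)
Your proof is correct and follows essentially the same route as the paper's: average $f$ over balls $B(x,t)$ with $t\le r$, split $f^2\le 2(f-f_t)^2+2f_t^2$, control the difference term by Jensen's inequality plus insertion of $\rho$, and bound the square of the average by an $L^\infty$ factor times an $L^1$ factor, with Fubini cancelling one power of $|B(0,t)|$. The only differences are cosmetic --- you integrate against $\mu_V$ throughout rather than passing to Lebesgue measure and back, so your Dirichlet coefficient lacks the harmless factor $\big(\sup_{|z|\le 2r}e^{V}\big)/\big(\inf_{|z|\le r}e^{V}\big)$ appearing in the paper's constraint (which you correctly reconcile), and the caveat you flag about restricting the infimum in $\beta_r$ to $t\le r$ is present, unremarked, in the paper's own proof as well.
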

\begin{proof} (1) For any $0<s\le r$ and $f\in \mathscr{D}(D_{\rho, V})$, define
$$f_s(x):=\frac{1}{|B(0,s)|}\int_{B(x,s)}f(z)\,dz,\quad x\in B(0,r).$$ We have
$$\sup_{x\in B(0,r)}|f_s(x)|\le \frac{1}{|B(0,s)|} \int_{B(0,2r)}|f(z)|\,dz,$$
and $$\aligned \int_{B(0,r)}|f_s(x)|\,dx&\le \int_{B(0,r)}\frac{1}{|B(0,s)|}\int_{B(x,s)}|f(z)|\,dz\,dx\\
&\le \int_{B(0,2r)}\bigg(\frac{1}{|B(0,s)|}\int_{B(z,s)}\,dx\bigg)|f(z)|\,dz\le \int_{B(0,2r)}|f(z)|\,dz.
\endaligned$$ Thus,
$$\aligned\int_{B(0,r)}f_s^2(x)\,dx\le & \Big(\sup_{x\in B(0,r)}|f_s(x)|\Big) \int_{B(0,r)}|f_s(x)|\,dx\\
\le &\frac{1}{|B(0,s)|} \bigg(\int_{B(0,2r)}|f(z)|\,dz\bigg)^2.\endaligned$$

Therefore, for any $f\in \mathscr{D}(D_{\rho, V})$ and $0<s\le r,$ by Jensen's inequality,
$$\aligned\int_{B(0,r)}&f^2(x)\,dx\\
\le & 2\int_{B(0,r)}\big(f(x)-f_s(x)\big)^2\,dx+ 2\int_{B(0,r)}f^2_s(x)\,dx\\
\le &2\int_{B(0,r)}\frac{1}{|B(0,s)|}\int_{B(x,s)}(f(x)-f(y))^2\,dy\,dx+ \frac{2}{|B(0,s)|} \bigg(\int_{B(0,2r)}|f(z)|\,dz\bigg)^2\\
\le & \bigg(\frac{2\sup_{0<\varepsilon\le s}\rho(\varepsilon)^{-1}}{|B(0,s)|}\bigg)\int_{B(0,r)}\int_{B(x,s)}(f(x)-f(y))^2\rho(|x-y|)\,dy\,dx\\
&+ \frac{2}{|B(0,s)|} \bigg(\int_{B(0,2r)}|f(z)|\,dz\bigg)^2\\
\le & \bigg(\frac{2\sup_{0<\varepsilon\le s}\rho(\varepsilon)^{-1}}{|B(0,s)|}\bigg)\int_{B(0,2r)}\int_{B(0,2r)}(f(x)-f(y))^2\rho(|x-y|)\,dy\,dx\\
&+ \frac{2}{|B(0,s)|} \bigg(\int_{B(0,2r)}|f(z)|\,dz\bigg)^2.\endaligned$$

(2) According to the inequality above, for any $f\in \mathscr{D}(D_{\rho, V})$ and $0<s\le r,$
\begin{align*}\int_{B(0,r)}&f^2(x)\,\mu_V(dx)\\
\le & \frac{1}{\inf_{|z|\le r} e^{V(z)}}\int_{B(0,r)}f^2(x)\,dx\\
\le & \bigg(\frac{2\big(\sup_{0<\varepsilon\le s}\rho(\varepsilon)^{-1}\big)}{|B(0,s)|\big(\inf_{|z|\le r} e^{V(z)}\big)}\bigg)\int_{B(0,2r)}\int_{B(0,2r)}(f(x)-f(y))^2\rho(|x-y|)\,dy\,dx\\
&+ \frac{2}{|B(0,s)|\big(\inf_{|z|\le r} e^{V(z)}\big)} \bigg(\int_{B(0,2r)}|f(z)|\,dz\bigg)^2\\
\le & \bigg(\frac{2\big(\sup_{0<\varepsilon\le s}\rho(\varepsilon)^{-1}\big)\big(\sup_{|z|\le 2r}e^{V(z)}\big)}{|B(0,s)|\big(\inf_{|z|\le r} e^{V(z)}\big)}\bigg)\\
&\qquad\qquad \times \int_{B(0,2r)}\int_{B(0,2r)}(f(x)-f(y))^2\rho(|x-y|)\,dy\,\mu_V(dx)\\
&+ \frac{2\big(\sup_{|z|\le 2r}e^{V(z)}\big)^2}{|B(0,s)|\big(\inf_{|z|\le r} e^{V(z)}\big)} \bigg(\int_{B(0,2r)}|f(x)|\,\mu_V(dx)\bigg)^2\\
\le & \bigg(\frac{2\big(\sup_{0<\varepsilon\le s}\rho(\varepsilon)^{-1}\big)\big(\sup_{|z|\le 2r}e^{V(z)}\big)}{|B(0,s)|\big(\inf_{|z|\le r} e^{V(z)}\big)}\bigg)D_{\rho, V}(f,f)\\
&+ \frac{2\big(\sup_{|z|\le 2r}e^{V(z)}\big)^2}{|B(0,s)|\big(\inf_{|z|\le r} e^{V(z)}\big)} \mu_V(|f|)^2.\end{align*}
The desired assertion for the case $0<s\le r$ follows from the conclusion above and the definition of $\beta_r$.

(3) When $s>r$, by (2),
$$ \int_{B(0,r)}f^2(x)\,\mu_V(dx)\le r D_{\rho, V}(f,f)+\beta_r(r) \mu_V(|f|)^2\le s D_{\rho, V}(f,f)+\beta_r(r\wedge s) \mu_V(|f|)^2.$$ The proof is completed.
 \end{proof}

We present the following two remarks for the proof of Theorem \ref{th1.1}.

\begin{itemize}
\item[(1)] The proof above is efficient for the following more general non-local Dirichlet form
$$\aligned \widetilde{D}_{j,V}(f,f):&=\iint_{x\neq y} \big(f(x)-f(y)\big)^2j(x,y)\,\mu_V(dy)\,\mu_V(dx),\\
\mathscr{D}(\widetilde{D}_{j,V}):&=\left\{f\in L^2(\mu_V):\widetilde{D}_{j,V}(f,f)<\infty\right\},\endaligned$$
where $j$ is a Borel measurable function
on $\R^{2d} \setminus\{(x, y) \in \R^{2d}: x = y\}$ such that $j(x, y) >0$ and $j(x, y) = j(y, x)$.
 See \cite[Section 2]{CW} for details.

\item[(2)] The argument above also works for $L^p$ $(p>1)$ setting. For instance, it can yield the statement as follows. If \eqref{th1.1} holds, then the following $L^p$-Poincar\'{e} inequality
    $$\mu_V(|f-\mu_V(f)|^p)\le 2c^{-1} \iint_{x\neq y} \frac{\big|f(x)-f(y)\big|^p}{|x-y|^{d+\alpha}}\,dy\,\mu_V(dx)=:2c^{-1}D_{\rho,V,p}(f,f)$$ holds, where
     $$ f\in \mathscr{D}(D_{\rho,V,p}):=\bigg\{f\in L^p(\mu_V): D_{\rho,V,p}(f,f)<\infty\bigg\},$$ and $L^p(\mu_V)$  denotes the set of Borel measurable functions $f$ on $\R^d$ such that $\int |f|^p(x)\,\mu_V(dx)<\infty.$ The proof is based on the argument of Theorem \ref{th1} (1) and the fact that for any $f\in L^p(\mu_V)$,
      $$\mu_V(|f-\mu_V(f)|^p)\le \iint |f(x)-f(y)|^p\,\mu_V(dy)\,\mu_V(dx),$$ due to the H\"{o}lder inequality. The readers can refer to \cite{HZ} for related discussion about $L^p$-Poincar\'{e} inequalities of local Dirichlet forms.
\end{itemize}

Now, we are in a position to give the

\begin{proof}[Proof of Theorem $\ref{th2}$]
(a) For any $f\in \mathscr{D}(D_{\rho, V})$ with $f>0$, by the Jensen inequality,
\begin{equation}\label{ent}
\begin{split}  \textrm{Ent}_{\mu_V}(f)&=\mu_V(f\log f)-\mu_V(f)\log \mu_V(f)\\
&\le \mu_V(f\log f)-\mu_V(f)\mu_V(\log f)\\
 &= \frac{1}{2}\iint \bigg[f(x)\log f(x)+f(y)\log f(y)\\
&\qquad\qquad-f(x)\log f(y)- f(y)\log f(x)\bigg]\,\mu_V(dy)\,\mu_V(dx)\\
&= \frac{1}{2}\iint (f(x)-f(y))(\log f(x)-\log f(y))\,\mu_V(dy)\,\mu_V(dx).\end{split}\end{equation}

Next, following the argument of Theorem 1.1 (1), we can obtain that under \eqref{th1.1}, for any $f\in \mathscr{D}(D_{\rho, V})$ with $f>0$,
 $$\aligned &\frac{1}{2}\iint (f(x)-f(y))(\log f(x)-\log f(y))\,\mu_V(dy)\,\mu_V(dx)\le c^{-1} D_{\rho,V}(f,\log f),
\endaligned$$ which, along with \eqref{ent}, completes the proof of the inequality \eqref{th2.2}.

(b) For any $p\in (1,2]$, $f\in \mathscr{D}(D_{\rho, V})$ with $f\ge0$, by the H\"{o}lder inequality,
\begin{equation} \label{beckner} \begin{split}\mu_V(f^p)&-\mu_V(f)^p\\
 \le&\mu_V(f^p)-\mu_V(f)\mu_V(f^{p-1})\\
=&\frac{1}{2}\iint \bigg[f^p(x)+f^p(y)-f(x)f^{p-1}(y)- f(y)f^{p-1}(x)\bigg]\,\mu_V(dy)\,\mu_V(dx)\\
=&\frac{1}{2}\iint (f(x)-f(y))(f^{p-1}(x)-f^{p-1}(y))\,\mu_V(dy)\,\mu_V(dx).\end{split}\end{equation}

Therefore, the desired Beckner-type inequality \eqref{th2.3} follows from \eqref{beckner} and the following fact
 $$\aligned &\frac{1}{2}\iint (f(x)-f(y))(f^{p-1}(x)-f^{p-1}(y))\,\mu_V(dy)\,\mu_V(dx)\le c^{-1} D_{\rho,V}(f, f^{p-1}),
\endaligned$$ where we have used \eqref{th1.1} again.
\end{proof}

To close this section, we present
\begin{proof}[Sketch of the Proof of Example $\ref{ex1}$] In this setting, $e^{-V(x)}=C_{d,\varepsilon}(1+|x|)^{-(d+\varepsilon)}$ and $\rho(r)=r^{-d-\alpha}.$
By the $C_r$-inequality, for any $x$, $y\in\R^d$ and $\varepsilon>0$,
$$|x-y|^{d+\varepsilon}\le 2^{d+\varepsilon-1}(|x|^{d+\varepsilon}+|y|^{d+\varepsilon})\le 2^{d+\varepsilon-1}\big((1+|x|)^{d+\varepsilon}+(1+|y|)^{d+\varepsilon}\big).$$

(a) For any $\varepsilon\ge \alpha$,
$$({e^{V(x)}+e^{V(y)}})\rho({|x-y|})\ge \frac{C_{d,\varepsilon}^{-1}\big((1+|x|)^{d+\varepsilon}+(1+|y|)^{d+\varepsilon}\big)}{2^{d+\alpha-1}\big((1+|x|)^{d+\alpha}+(1+|y|)^{d+\alpha}\big)}\ge \frac{2^{1-(d+\alpha)}}{C_{d,\varepsilon}}.$$ Combining it with Theorem \ref{th1} (1), we get the first assertion.

(b) For any $\varepsilon<\alpha$,
$$ \inf\limits_{0<|x-y|\le s}\big[(e^{V(x)}+e^{V(y)})\rho(|x-y|)\big]\ge C_{d,\varepsilon}^{-1}2^{1-(d+\varepsilon)}s^{\varepsilon-\alpha}.$$ Then, choosing $s= c r^{-1/\varepsilon}$ in the definition of $\alpha$, we arrive at the second assertion.

(c) For any $\varepsilon>\alpha$, \eqref{th1.2} holds with $\omega(x)=c_1(1+|x|)^{\varepsilon-\alpha}$, and $$\beta_t(s)\le c_2(1+s^{-d/\alpha}t^{(d+\varepsilon)(2+d/\alpha)}).$$ Then, the third assertion follows from the definition of $\beta$ by taking $s=c_3r$ and $t=c_4r^{-1/(\varepsilon-\alpha)}$. \end{proof}
\section{Applications: Porous media equations}
Functional inequalities for non-local Dirichlet forms appear throughout the probability literature, and also are interesting in analysis, e.g.\ see references in \cite{MRS,Gre}. This section is mainly motivated by \cite{DGGW, Wang} for the description of the convergence rate of porous media equations by using $L^p$ functional inequalities. Let $(L_{\rho,V},\mathscr{D}(L_{\rho,V}))$ be the generator corresponding to Dirichlet form $(D_{\rho,V}, \mathscr{D}(D_{\rho,V}))$. Consider the following equation
\begin{equation}\label{appl1}\partial_tu(t, \cdot)=L_{\rho,V}\{u(t, \cdot)^m\},\quad u(0,\cdot)=f,\end{equation} where $m>1$, $f$ is a bounded measurable function on $\R^d$ and $u^m:=\textrm{sgn}(u)|u|^m.$ We call $T_tf:=u(t,\cdot)$ a solution to the equation \eqref{appl1}, if $u(t,\cdot)^m\in \mathscr{D}(L_{\rho,V})$
for all $t>0$ and $u^m\in L^1_{\textrm{loc}}([0,\infty)\to \mathscr{D}(D_{\rho,V}); dt)$ such that, for any $g\in\mathscr{D}(D_{\rho,V}),$
$$\mu_V(u(t,\cdot)g)=\mu_V(fg)-\int_0^tD_{\rho,V}(u(s,\cdot)^m,g)\,ds,\quad t>0.$$
\begin{theorem}
Assume that for any bounded measurable function $f\in\mathscr{D}(L_{\rho,V})$ the equation \eqref{appl1} has a unique solution $T_tf$. If \eqref{th1.1} holds, then
$$\mu_V((T_tf)^2)\le \bigg[\mu_V(f^2)^{-(m-1)/2}+{c^{-1}(m-1)t}\bigg]^{-2/(m-1)},\quad t\ge0, \,\,\mu_V(f)=0.$$ \end{theorem}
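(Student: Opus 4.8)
The plan is the classical Lyapunov-functional argument of \cite{DGGW,Wang}, which reduces the statement to a scalar differential inequality for $\phi(t):=\mu_V((T_tf)^2)$. Write $u(t,\cdot):=T_tf$. Two preliminary facts come straight from the weak formulation: testing against $g\equiv1$ (for which $D_{\rho,V}(v,1)=0$) gives $\mu_V(u(t,\cdot))=\mu_V(f)=0$ for all $t\ge0$; and testing against $g=u(t,\cdot)$ and using the usual chain rule for $t\mapsto\|u(t,\cdot)\|^2_{L^2(\mu_V)}$ gives the energy identity
$$\frac{d}{dt}\,\phi(t)=-2\,D_{\rho,V}\big(u(t,\cdot)^m,u(t,\cdot)\big)\qquad\text{for a.e. }t>0.$$
Since $\phi$ is non-increasing, only the inequality ``$\le$'' here will be used. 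We may assume $\phi(0)=\mu_V(f^2)>0$, as otherwise $f=0$ $\mu_V$-a.e.\ and there is nothing to prove.

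The heart of the proof is the inequality
$$D_{\rho,V}\big(v^m,v\big)\ \ge\ c\,\mu_V(|v|^{m+1})\ \ge\ c\,\mu_V(v^2)^{(m+1)/2}\qquad\text{whenever }v\in\mathscr D(D_{\rho,V}),\ \mu_V(v)=0,$$
where, as in the statement, $v^m=\textrm{sgn}(v)|v|^m$. For the first inequality I would imitate verbatim the computation in the proof of Theorem~\ref{th1}(1): the map $s\mapsto\textrm{sgn}(s)|s|^m$ is non-decreasing, so $(v^m(x)-v^m(y))(v(x)-v(y))\ge0$, and inserting $\rho(|x-y|)\rho(|x-y|)^{-1}$ and invoking \eqref{th1.1} exactly as there gives
$$\tfrac12\iint\big(v^m(x)-v^m(y)\big)\big(v(x)-v(y)\big)\,\mu_V(dy)\,\mu_V(dx)\ \le\ c^{-1}\,D_{\rho,V}(v^m,v).$$
Expanding the left-hand side, noting $v^m(x)v(x)=|v(x)|^{m+1}$, and using $\mu_V(v)=0$ to kill the cross terms, it equals $\mu_V(|v|^{m+1})$. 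The second inequality is Jensen's inequality for the convex function $s\mapsto s^{(m+1)/2}$ (here $m>1$) against the probability measure $\mu_V$, applied to $s=v^2$.

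Taking $v=u(t,\cdot)$ and combining with the energy identity gives the differential inequality $\phi'(t)\le-2c\,\phi(t)^{(m+1)/2}$. Putting $\psi(t):=\phi(t)^{-(m-1)/2}$ turns this into $\psi'(t)=-\tfrac{m-1}{2}\,\phi(t)^{-(m+1)/2}\phi'(t)\ge c(m-1)$, so $\psi(t)\ge\psi(0)+c(m-1)t$; undoing the substitution yields
$$\mu_V\big((T_tf)^2\big)\ \le\ \Big[\mu_V(f^2)^{-(m-1)/2}+c(m-1)t\Big]^{-2/(m-1)},\qquad t\ge0,$$
which is the asserted estimate. I expect the genuine difficulty to lie entirely in the first paragraph: justifying the energy identity rigorously from the weak formulation — i.e.\ that $u(t,\cdot)\in\mathscr D(D_{\rho,V})$ so that $D_{\rho,V}(u(t,\cdot)^m,u(t,\cdot))$ makes sense, and that one may legitimately take $g=u(t,\cdot)$ as a test function and differentiate $\phi$. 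This is the standard technical hurdle for nonlinear evolution equations; it should be overcome by an approximation argument together with the $L^\infty$-contractivity $\|T_tf\|_\infty\le\|f\|_\infty$ of the porous-medium semigroup and the standing assumptions on $u^m$. Once that is in place, the rest is the routine scalar computation above.
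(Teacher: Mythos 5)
Your proposal is correct and follows essentially the same route as the paper: establish $\mu_V(|v|^{m+1})\le c^{-1}D_{\rho,V}(v,v^{m})$ for mean-zero $v$ by repeating the computation of Theorem~\ref{th1}(1), pass to $\mu_V(v^2)^{(m+1)/2}\le\mu_V(|v|^{m+1})$ by Jensen/H\"older, and integrate the resulting scalar differential inequality for $\phi(t)=\mu_V((T_tf)^2)$; you are in fact more explicit than the paper about why $\mu_V(T_tf)=0$ and about the energy identity, which the paper obtains formally from the generator. One remark on constants: your argument yields $c(m-1)t$ in place of the stated $c^{-1}(m-1)t$, but this traces to a slip in the paper's own proof, which deduces $-2D_{\rho,V}(T_tf,(T_tf)^m)\le -2c^{-1}\mu_V((T_tf)^{m+1})$ from $\mu_V(f^{m+1})\le c^{-1}D_{\rho,V}(f,f^{m})$ when the correct consequence is $-2c\,\mu_V((T_tf)^{m+1})$, exactly as you obtained.
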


\begin{proof} The argument of Theorem \ref{th2} gives us that, under \eqref{th1.1} for any $m>1$ and $f\in \mathscr{D}(L_{\rho,V})$ with $\mu_V(f)=0$,
\begin{equation}\label{proof1th4} \mu_V(f^{m+1})\le c^{-1} D_{\rho,V}(f,f^{m}). \end{equation}

Now, let $f$ be a function such that $\mu_V(f)=0$. Then, by the definition of the solution to the equation \eqref{appl1}, $\mu_V(T_tf)=0$ for all $t\ge0$. According to \eqref{proof1th4}, we obtain that
 $$\aligned \frac{d \mu_V(T_tf)^2}{dt}&=2\mu_V\Big(T_tf \partial_t T_tf\Big)=2\mu_V(T_tf L\{(T_tf)^m\})\\
 &=-2D_{\rho,V}(T_tf, (T_tf)^m)\le -2c^{-1}\mu_V((T_tf)^{m+1})\\
 &\le -2c^{-1}\Big[ \mu_V((T_tf)^2) \Big]^{\frac{m+1}{2}},
\endaligned$$ where in the inequality we have used the H\"{o}lder inequality. The required assertion easily follows from the inequality above.
 \end{proof}

\

\noindent{\bf Acknowledgements.} The author would like to thank two referees,
Professor Feng-Yu Wang and Dr.\ Xin Chen for helpful comments.
Financial support through National Natural Science Foundation of
China (No.\ 11201073) and the Program for New Century
Excellent Talents in Universities of Fujian (No.\ JA11051 and
JA12053) is also gratefully acknowledged.

\end{document}